\newcommand{\R}{\mathbb{R}}
\newcommand{\N}{\mathbb{N}}
\newcommand{\Z}{\mathbb{Z}}
\def\spheresdp/{A}
\def\spacesdp/{B}
\newtheorem{defin}{Definition}[section]
\newtheorem{proposition}[defin]{Proposition}
\newtheorem{theorem}[defin]{Theorem}
\newtheorem{lemma}[defin]{Lemma}
\newenvironment{customthm}[1]
  {\innercustomthm}
  {\endinnercustomthm}
\begin{document}

\title{Optimal densities of packings consisting of highly unequal objects}
\author{\href{http://www.daviddelaat.nl}{David de Laat}}

\address{D.~de Laat, CWI, Science Park 123, 1098 XG Amsterdam, The Netherlands} 
\email{mail@daviddelaat.nl}

\thanks{The author was supported by Vidi grant
  639.032.917 from the Netherlands Organization for Scientific
  Research (NWO)}

\subjclass{52C17}

\keywords{binary sphere packing, polydispersity, optimal limiting density, asymptotic density bounds, several radii, large size ratio}

\date{\today}

\begin{abstract} 
Let $\Delta$ be the optimal packing density of $\R^n$ by unit balls. We show the optimal packing density using two sizes of balls approaches $\Delta + (1 - \Delta) \Delta$ as the ratio of the radii tends to infinity. More generally, if $B$ is a body and $D$ is a finite set of bodies, then the optimal density $\smash{\Delta_{\{rB\} \cup D}}$ of packings consisting of congruent copies of the bodies from $\{rB\} \cup D$ converges to $\Delta_D + (1 - \Delta_D) \Delta_{\{B\}}$ as $r$ tends to zero.
\end{abstract}

\maketitle

\section{Introduction}

There has been extensive research into the determination of optimal monodisperse packing densities. A well-known example is Hales's proof of the Kepler conjecture on the optimal sphere packing density in $\R^3$ \cites{Hales2005a}. More recently, packings with polydispersity have been investigated: New lower and upper bounds for the density of packings of spheres using several sizes have been given in respectively \cite{HopkinsStillingerTorquato2012} and \cite{LaatOliveiraVallentin2012}. In applications, sphere packings can be used to model many-particle systems, and here it is important to also consider polydispersity as this can ``dramatically affect the microstructure and the effective properties of the materials'' \cite{UcheStillingerTorquato2004}. In this note we discuss the case of wide dispersity; that is, the case where the size ratio of the larger to the smaller objects grows large. One would expect boundary behavior to become negligible as the ratio of the radii tends to infinity, which intuitively means the density converges to $\Delta + (1 - \Delta) \Delta$; see for example \cite{Torquato2001}. To the best of our knowledge a proof of this has not yet been published. Here we provide such a proof which uses standard techniques albeit it is not trivial. 

We prove the following theorem:
\begin{customthm}{4.2}
Suppose $B$ is a body and $D$ is a finite set of bodies. Then,
\[
\lim_{r \downarrow 0} \Delta_{\{rB\} \cup D}
= \Delta_D + (1 - \Delta_D) \Delta_{\{B\}}.
\]
\end{customthm}
\noindent
Here, given a set of bodies $D$, we denote by $\Delta_D$ the optimal packing density using the bodies from $D$. If we take $B$ to be the unit disk in $\R^2$ and $D = \{B\}$, then the theorem says the optimal packing density using two sizes of disks converges to $0.9913\ldots$ as the ratio of the radii goes to infinity. By using that the optimal sphere packing density in $\R^3$ is known to be $\pi/(3\sqrt{2}) = 0.7404\ldots$, the theorem says the optimal packing density using two sizes of balls converges to $0.9326\ldots$ as the ratio of the radii goes to infinity.

\section{Packings and density}

We define a \emph{body} to be a bounded subset of $\R^n$ that has nonempty interior and whose boundary has Lebesgue measure zero. Such a set is Jordan measurable, which means its Lebesgue measure can be approximated arbitrarily well by the volume of inner and outer approximations by finite unions of $n$-dimensional rectangles \cite{Bogachev2007}. Moreover, since the interior of a body is nonempty it contains a ball with strictly positive radius and hence has strictly positive Lebesgue measure.

A \emph{packing} using a set of bodies $D$ is a set of congruent copies of the elements in $D$ such that the interiors of the copies are pairwise disjoint. In other words, a packing is of the form 
\[
P = \Big\{ R_i B_i + t_i : i \in \N, R_i \in O(n), \, t_i \in \R^n, \, B_i \in D\Big\},
\]
where $(R_i B_i^\circ + t_i) \cap (R_j B_j^\circ + t_j) = \emptyset$ for all $i \neq j$. Here $B_i^\circ$ denotes the interior of $B_i$, and $O(n)$ is the orthogonal group. Define $\Sigma_D$ to be the set of packings that use bodies from $D$ and $\Lambda_D$ the set of packings $P \in \Sigma_D$ that have \emph{rational box periodicity}; that is, for which there exists a $p \in \mathbb Q$ such that $|P|+pe_i = |P|$ for all $i \in [n]$. Here $|P| = \bigcup P$ denotes the \emph{carrier} of $P$ and $e_i$ is the $i$th unit vector. 

The \emph{density} and \emph{upper density} (provided these exist) of a set $S \subseteq \R^n$ are defined as
\[
\rho(S) = \lim_{r \to \infty} \frac{\lambda(S \cap r C)}{r^n} \quad \text{and} \quad \overline\rho(S) = \limsup_{r \to \infty} \frac{\lambda(S \cap r C)}{r^n}.
\]
Here $\lambda$ is the Lebesgue measure on $\R^n$, and $C$ is the axis-aligned unit cube centered about the origin. The upper density $\bar\rho(|P|)$ is defined for every $P \in \Sigma_D$, because for each $r > 0$, the set $|P| \cap r C$ is Lebesgue measurable with measure at most $r^n$. The density $\rho(|P|)$ is defined for every $P \in \Lambda_D$: Let $p \in \mathbb Q$ be a period of $P$, then $c = \lambda(|P| \cap kpC)/(kp)^n$ does not depend on $k \in \N$, and for $r$ inbetween $kp$ and $(k+1)p$ we have 
\[
\frac{\lambda(|P| \cap kpC)}{((k+1)p)^n} \leq \frac{\lambda(|P| \cap rC)}{r^n} \leq \frac{\lambda(|P| \cap kpC)}{((k+1)p)^n} + \frac{((k+1)p)^n - (kp)^n}{((k+1)p)^n},
\]
where both the rightmost term and $|\frac{\lambda(|P| \cap kpC)}{((k+1)p)^n} - c|$ converge to $0$ as $k \to \infty$.

We define the \emph{optimal packing density} for packings that use bodies from $D$ by
\[
\Delta_D = \sup_{P \in \Sigma_D} \overline\rho(|P|) = \sup_{P \in \Lambda_D} \rho(|P|).
\]
The second equality follows because for each $P \in \Sigma_D$, we can construct a packing from $\Lambda_D$ whose density is arbitrarily close to $\overline\rho(|P|)$ by taking the subpacking contained in a sufficiently large cube and tiling space with this part of the packing. One might wonder whether the optimal density depends on $C$ being a cube, but it follows from \cite{Groemer1963} that the optimal density $\Delta_D$ is also equal to 
\[
\sup_{P \in \Lambda_D} \lim_{r \to \infty} \lambda(|P| \cap (rB+t))/\lambda(rB),
\]
where $t$ is any point in $\R^n$ and where $B$ is any compact set that is the closure of its interior and contains the origin in its interior.

%

\section{Approximating the interstitial space of a packing}

We first show that a packing, and hence the  interstitial space of a packing, can be approximated uniformly by grid cubes. Given $S \subseteq \R^n$ and $k \in \Z$, define the packings
\[
G_k(S) = \Big\{ C_{k,t} : t \in \Z^n, \, C_{k,t} \subseteq S \Big\} \quad \text{and} \quad G^k(S) = \Big\{ C_{k,t} : t \in \Z^n, \, C_{k,t} \cap S \neq \emptyset \Big\},
\]
where $C_{k,t}$ is the cube $[\frac{t_1}{2^k}, \frac{t_1+1}{2^k}] \times \cdots \times [\frac{t_n}{2^k}, \frac{t_n+1}{2^k}]$ having side length $2^{-k}$.
Given a set $P$ of subsets of $\R^n$, let $P^c = \R^n \setminus |P|$. 

\begin{lemma}
\label{lem:cube approx}
Let $D$ be a finite set of bodies. Then
\[
\rho(|G_k(|P|)|) \uparrow \rho(|P|) \quad \text{and} \quad \rho(|G^k(|P|)|) \downarrow \rho(|P|) 
\]
and hence
\[
\rho(|G_k(P^c)|) \uparrow 1-\rho(|P|) \quad \text{and} \quad \rho(|G^k(P^c)|) \downarrow 1-\rho(|P|) 
\]
as $k \to \infty$ for $P \in \Lambda_D$ uniformly.  
\end{lemma}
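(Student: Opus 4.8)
The key observation is that $G_k(|P|) \subseteq P \subseteq G^k(|P|)$ as unions of sets, so $\rho(|G_k(|P|)|) \le \rho(|P|) \le \rho(|G^k(|P|)|)$ whenever the densities on the left and right exist. Monotonicity in $k$ is immediate: refining the dyadic grid only adds cubes to $G_k$ and only removes cubes from $G^k$. So the real content is twofold: (i) that these densities actually exist for $P \in \Lambda_D$, and (ii) that the convergence $\rho(|G_k(|P|)|) \to \rho(|P|)$ and $\rho(|G^k(|P|)|) \to \rho(|P|)$ happens at a rate that is uniform over all $P \in \Lambda_D$. The statements about $P^c$ then follow since $|G_k(P^c)|$ and $\R^n \setminus |G^k(|P|)|$ differ only along the grid boundary (measure zero), giving $\rho(|G_k(P^c)|) = 1 - \rho(|G^k(|P|)|)$ and similarly $\rho(|G^k(P^c)|) = 1 - \rho(|G_k(|P|)|)$, which swaps the two monotone directions as stated.

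**Existence of the densities.** For $P \in \Lambda_D$ with rational period $p$, I would first note that $G_k(|P|)$ and $G^k(|P|)$ are themselves box-periodic with period $p$ once $2^k p \in \Z$ (which holds for all large $k$); hence their densities exist by the same averaging argument given in the excerpt for $\Lambda_D$. For small $k$ one can pass to a common refinement of the dyadic grid and the period, or simply start the sequence at the first $k$ with $2^k p \in \Z$ — the monotone limits are unaffected. So existence is routine.

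**Uniform convergence — the main obstacle.** This is the heart of the lemma. The point is that $|P| \setminus |G_k(|P|)|$ consists of dyadic cubes of side $2^{-k}$ that meet the topological boundary $\partial|P|$, and $|G^k(|P|)| \setminus |P|$ similarly consists of such cubes. So I need a bound, uniform in $P \in \Lambda_D$, on the density of the $2^{-k}$-neighborhood of $\partial|P|$. The difficulty is that $D$ being finite does not a priori bound the number of bodies per unit volume in a packing — copies of a tiny body could accumulate. The resolution is that each body has strictly positive Lebesgue measure (noted in the excerpt), so in any packing the number of copies meeting a bounded region is bounded; combined with the fact that each body has measure-zero boundary, one gets that $\partial|P|$ is contained in finitely many translated/rotated copies of the boundaries $\partial B$, $B \in D$. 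The real estimate I would aim for: since $\partial B$ has measure zero and $B$ is bounded, for each $\varepsilon > 0$ there is a $\delta$ such that the $\delta$-neighborhood of $\partial B$ has measure $< \varepsilon \,\lambda(B)$, and this can be chosen uniformly over the finitely many $B \in D$ and over all rotations (compactness of $\ort(n)$). Summing over the copies in a large cube and using that their total volume is at most the cube's volume, the density of the $\delta$-neighborhood of $\partial|P|$ is $< \varepsilon$ uniformly in $P$. Taking $k$ large enough that $2^{-k}\sqrt{n} < \delta$ then forces $\rho(|P|) - \rho(|G_k(|P|)|) < \varepsilon$ and $\rho(|G^k(|P|)|) - \rho(|P|) < \varepsilon$ for all $P \in \Lambda_D$ simultaneously, which is exactly uniform convergence.

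**Assembling.** With the uniform boundary-neighborhood bound in hand, the proof is: establish the sandwiching inclusions and monotonicity (immediate), establish existence of all densities via box-periodicity (routine), prove the uniform estimate on the density of the $\delta$-neighborhood of $\partial|P|$ (the work), and finally translate everything to the complementary sets $P^c$ by the measure-zero-grid-boundary remark. I expect the compactness argument over $\ort(n)$ making the neighborhood estimate uniform across rotations, together with the volume-packing bound turning "finitely many bodies" into a genuine density bound, to be where the care is needed.
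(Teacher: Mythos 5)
Your proposal is correct and follows essentially the same route as the paper: sandwich $|G_k(|P|)|\subseteq|P|\subseteq|G^k(|P|)|$, existence of the densities via the inherited rational box periodicity, and uniform convergence obtained from a per-body boundary estimate (Jordan measurability / measure-zero boundary), made uniform over the finite set $D$, combined with a bound on the number (equivalently, total volume) of bodies per unit volume coming from each body having positive measure. Your phrasing via the $\delta$-neighborhood of $\partial|P|$ instead of the paper's per-body cube counts $\lambda(|G_k(B)|)\geq\lambda(B)-\varepsilon$ is only a cosmetic variant (and the appeal to compactness of $\ort(n)$ is unnecessary, since rigid motions preserve the measure of boundary neighborhoods), so the two arguments are the same in substance.
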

\begin{proof}
Let $\varepsilon > 0$ and $P \in \Lambda_D$. Since $P$ has rational box periodicity, the packings $G_k(|P|)$ and $G^k(|P|)$ have rational box periodicity, which means the densities $\rho(|G_k(|P|)|)$ and $\rho(|G^k(|P|)|)$ are defined. For each $k \in \Z$ we have 
\[
|G_k(|P|)| \subseteq |P| \subseteq |G^k(|P|)|,
\]
hence 
\[
\rho(|G_k(|P|)|) \leq \rho(|P|) \leq \rho(|G^k(|P|)|).
\]

We have
\[
\rho(|G_k(|P|)|) = \lim_{r \to \infty} \frac{\lambda(|G_k(|P|)| \cap r C)}{r^n}  \geq \lim_{r \to \infty} \frac{1}{r^n} \sum_{B \in P : B \subseteq rC} \lambda(|G_k(B)|)
\]
and
\[
\rho(|G^k(|P|)|) = \lim_{r \to \infty} \frac{\lambda(|G^k(|P|)| \cap r C)}{r^n} \leq \lim_{r \to \infty} \frac{1}{r^n} \sum_{B \in P : B \cap rC \neq \emptyset} \lambda(|G^k(B)|).
\]
Every $B \in D$ is Jordan measurable, so there exists a number $K = K(B, \varepsilon)$ such that
\[
\lambda(|G_k(B)|) \geq \lambda(B) - \varepsilon \quad \text{and} \quad \lambda(|G^k(B)|) \leq \lambda(B) + \varepsilon
\]
for all $k \geq K$.
Since $D$ is a finite set, this implies
\[
\rho(|G_k(|P|)|) \geq \lim_{r \to \infty} \frac{1}{r^n} \sum_{B \in P : B \subseteq rC} (\lambda(B) - \varepsilon)
\]
and
\[
\rho(|G^k(|P|)|) \leq \lim_{r \to \infty} \frac{1}{r^n} \sum_{B \in P : B \cap rC \neq \emptyset} (\lambda(B) + \varepsilon)
\]
for all $k \geq \max_{B \in D} K(B,\varepsilon)$.
Since each body $B$ in the finite set $D$ is bounded, there exists a number $r_0 = r_0(D) \geq 0$ such that
\begin{align*}
\lim_{r \to \infty} \frac{1}{r^n} \sum_{B \in P : B \subseteq rC} \lambda(B) 
\geq \lim_{r \to \infty} \frac{\lambda(|P| \cap (r-r_0)C)}{r^n} = \lim_{r \to \infty} \frac{\lambda(|P| \cap rC)}{(r+r_0)^n} = \rho(|P|)
\end{align*}
and
\begin{align*}
\lim_{r \to \infty} \frac{1}{r^n} \sum_{B \in P : B \cap rC \neq \emptyset} \lambda(B) 
\leq \lim_{r \to \infty} \frac{\lambda(|P| \cap (r+r_0)C)}{r^n} = \lim_{r \to \infty} \frac{\lambda(|P| \cap rC)}{(r-r_0)^n}
= \rho(|P|).
\end{align*}
Moreover, each body in the finite set $D$ has nonempty interior, so there exists a constant $c = c(D)$ such that the number of congruent copies of elements from $D$ that fit in a cube of radius $r+r_0$ is at most $c r^n$. Hence,
\[
\lim_{r \to \infty} \frac{1}{r^n} \sum_{B \in P : B \subseteq rC} \varepsilon \leq c\varepsilon \quad \text{and} \quad \lim_{r \to \infty} \frac{1}{r^n} \sum_{B \in P : B \cap rC \neq \emptyset} \varepsilon \leq c\varepsilon.
\]
Hence, for all $k \geq \max_{B \in D} K(B,\varepsilon)$ we have
\[
\rho(|G_k(|P|)|) \geq \rho(|P|) - c\varepsilon \quad \text{and} \quad \rho(|G^k(|P|)|) \leq \rho(|P|) + c\varepsilon,
\] 
which implies
\[
\rho(|G_k(|P|)|) \uparrow \rho(|P|) \quad \text{and} \quad \rho(|G^k(|P|)|) \downarrow \rho(|P|),
\]
and hence 
\[
\rho(|G_k(P^c)|) = 1 - \rho(|G^k(|P|)|) \uparrow 1 - \rho(|P|)
\]
and
\[
\rho(|G^k(P^c)|) = 1 - \rho(|G_k(|P|)|) \downarrow 1 - \rho(|P|),
\]
as $k \to \infty$ for $P \in \Lambda_D$ uniformly.
\end{proof} 

\section{Polydisperse packings}

Let $D$ and $D'$ be sets of bodies. Given $P \in \Lambda_D$, define
\[
\Lambda_{D'}(P) = \Big\{ Q \in \Lambda_{D \cup D'} : Q = P \cup R, \, R \in \Lambda_{D'}\Big\}.
\]
The optimal density of such packings is given by  $\Delta_{D'}(P) = \sup_{Q \in \Lambda_{D'}(P)} \rho(|Q|)$. In the following lemmas we give the optimal density given that a part of the packing is already fixed.

\begin{lemma}\label{lem:smalldiameter}
Suppose $D$ is a finite set of bodies. For every $\varepsilon > 0$ there is a scalar $R = R(D, \varepsilon) > 0$ such that
\[
\rho(|P|) + (1-\rho(|P|)) \Delta_{\{B\}} - \varepsilon \leq \Delta_{\{B\}}(P) \leq \rho(|P|) + (1-\rho(|P|)) \Delta_{\{B\}} + \varepsilon
\]
for all $P \in \Lambda_D$ and all bodies $B$ with $\mathrm{diam}(B) \leq R$.
\end{lemma}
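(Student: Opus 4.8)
The plan is to prove the two inequalities separately; both sides rest on the cube-approximation Lemma~\ref{lem:cube approx}, which lets us replace the interstitial space $P^c$ of a fixed packing $P \in \Lambda_D$ by a grid-cube packing on which the density behaves additively.

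For the \emph{lower bound} I would argue as follows. Fix $\varepsilon > 0$ and $P \in \Lambda_D$. By Lemma~\ref{lem:cube approx} choose $k$ large enough that $\rho(|G_k(P^c)|) \geq (1 - \rho(|P|)) - \varepsilon'$ uniformly in $P$, for a suitable $\varepsilon'$ depending on $\varepsilon$ and $\Delta_{\{B\}}$. Now $|G_k(P^c)|$ is a disjoint (up to boundaries) union of grid cubes of side $2^{-k}$, each of which lies inside $P^c$, so we are free to place an independent packing of copies of $B$ inside each such cube. Using the characterization of $\Delta_{\{B\}}$ via rational-box-periodic packings, and rescaling, inside a cube of side $2^{-k}$ we can fit a packing of copies of $B$ with density at least $\Delta_{\{B\}} - \varepsilon''$, \emph{provided} $\mathrm{diam}(B)$ is small compared to $2^{-k}$ — and here is where $R = R(D,\varepsilon)$ enters: having chosen $k$ from the uniform statement of Lemma~\ref{lem:cube approx}, we set $R = R(D,\varepsilon)$ smaller than (a fixed fraction of) $2^{-k}$. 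Taking the union $Q = P \cup R$ of $P$ with all these small-ball packings gives $Q \in \Lambda_{\{B\}}(P)$ — note $Q$ is still rational-box-periodic since both ingredients are, on a common refined period — and $\rho(|Q|) \geq \rho(|P|) + (\Delta_{\{B\}} - \varepsilon'') \rho(|G_k(P^c)|) \geq \rho(|P|) + (1 - \rho(|P|))\Delta_{\{B\}} - \varepsilon$ after bookkeeping. Since $\Delta_{\{B\}}(P)$ is a supremum over such $Q$, this gives the lower inequality.

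For the \emph{upper bound}, let $Q \in \Lambda_{\{B\}}(P)$ be arbitrary, say $Q = P \cup R$ with $R$ a packing of copies of $B$. The copies of $B$ in $R$ all lie in $P^c$ (their interiors are disjoint from those of the bodies of $P$), but they need not fit inside the grid cubes of $G_k(P^c)$: a copy of $B$ may straddle the boundary of $P$. However, if $\mathrm{diam}(B) \leq R$ is small, then any copy of $B$ in $R$ that meets a grid cube $C_{k,t} \notin G^k(P^c)$ — i.e.\ one of the cubes \emph{not} entirely outside, which are exactly the cubes touching $|P|$ — is confined to a thin shell of width $O(\mathrm{diam}(B))$ around $\partial|P|$. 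So split the copies of $B$ into those contained in $|G_k(P^c)|$ (more precisely, in the cubes of $G_k(P^c)$) and the rest. The first kind contribute, per grid cube, at most $\Delta_{\{B\}}$ times the cube's volume, hence at most $\Delta_{\{B\}}\,\rho(|G_k(P^c)|) \le \Delta_{\{B\}}(1-\rho(|P|)) + \varepsilon'$ to the density by Lemma~\ref{lem:cube approx} again; the second kind are supported in the boundary shell, whose density can be made $\leq \varepsilon'$ by first choosing $k$ large (making $\rho(|G^k(|P|)|) - \rho(|P|)$ small uniformly) and then choosing $R$ small relative to $2^{-k}$ so that the shell around $\partial|P|$ of width $O(\mathrm{diam}(B))$ is swallowed by the difference $|G^k(|P|)| \setminus |G_k(|P|)|$. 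Adding $\rho(|P|)$ for the contribution of $P$ itself yields $\rho(|Q|) \leq \rho(|P|) + (1-\rho(|P|))\Delta_{\{B\}} + \varepsilon$; taking the supremum over $Q$ finishes the proof.

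**The main obstacle** I expect is making the choice of $R$ genuinely \emph{uniform} over all $P \in \Lambda_D$ and all small bodies $B$, and in the upper bound, correctly controlling the copies of $B$ that straddle $\partial|P|$. The key leverage is that Lemma~\ref{lem:cube approx} is uniform in $P$: for a given $\varepsilon$ one first picks a single $k$ that works for every $P \in \Lambda_D$ simultaneously (for both $G_k$ and $G^k$), and only afterwards picks $R$ as a fixed fraction of the now-fixed grid scale $2^{-k}$; a copy of $B$ with $\mathrm{diam}(B) \le R$ that is not inside some cube of $G_k(P^c)$ must intersect $|G^k(|P|)|$, and since it has diameter at most $R \ll 2^{-k}$ it is contained in the $2^{-k}$-neighborhood of $|G^k(|P|)|$, i.e.\ in a controlled superset of $|G^k(|P|)| \setminus |G_k(|P|)|$ whose density is small by the lemma. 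One also has to check that the constructed $Q$ really lies in $\Lambda_{\{B\}}(P)$, i.e.\ is rational-box-periodic — this is routine since we can take the small-ball packings inside the grid cubes to be placed periodically with a period compatible with both $2^{-k}$ and the period of $P$ — and that the interiors remain disjoint, which holds because each small-ball packing lives in the closed cube's interior-separated copy inside $P^c$.
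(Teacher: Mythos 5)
Your lower bound is essentially the paper's argument and is fine: pick $k$ uniformly from Lemma~\ref{lem:cube approx}, pack each cube of $G_k(P^c)$ with copies of $B$ at density at least $\Delta_{\{B\}}-\varepsilon''$ once $\mathrm{diam}(B)$ is small relative to $2^{-k}$, and add this to $P$. The problem is in your upper bound, and it is a genuine gap. You split the copies of $B$ into those contained in a \emph{single} cube of $G_k(P^c)$ (only for these does the tiling argument give the per-cube bound $\Delta_{\{B\}}\cdot\lambda(C_{k,t})$) and ``the rest,'' and you claim the rest is confined to a thin shell around $\partial|P|$, equivalently that any copy not inside some cube of $G_k(P^c)$ must intersect $|G^k(|P|)|$. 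That claim is false: a copy of $B$ can straddle the common face of two adjacent cubes that are \emph{both} in $G_k(P^c)$, deep inside a large empty region, arbitrarily far from $|P|$. Such copies belong to your second class but are not captured by any shell around $\partial|P|$ or by $|G^k(|P|)|$, so their total volume is simply unaccounted for in your estimate. (Two smaller soft spots: the cubes not in $G^k(P^c)$ are those \emph{contained} in $|P|$, not the cubes ``touching'' $|P|$; and the density of the $2^{-k}$-neighborhood of $|G^k(|P|)|$ is not directly controlled by Lemma~\ref{lem:cube approx} — that needs, e.g., passing to a coarser grid level.)

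The gap is repairable: the straddling copies lie within distance $\mathrm{diam}(B)$ of the grid hyperplanes $x_i\in 2^{-k}\Z$, a set of density about $2nR\,2^{k}$, so it suffices to choose $R$ small compared to $\varepsilon\,2^{-k}$ (not merely ``a fixed fraction of $2^{-k}$'') and add this term to the bookkeeping. The paper avoids the issue altogether by working with the \emph{covering} family $G^k(P^c)$ instead of the inner family: for every $Q\in\Lambda_{\{B\}}(P)$ one bounds $\lambda(|Q\setminus P|\cap C)\le(\Delta_{\{B\}}+\varepsilon/3)\lambda(C)$ for each cube $C$ of $G^k(P^c)$ (copies meeting $C$ lie in $C$ enlarged by $\mathrm{diam}(B)$, and the enlarged cube tiles space), then sums over the cubes, using $\rho(|G^k(P^c)|)\le 1-\rho(|P|)+\varepsilon/3$ from Lemma~\ref{lem:cube approx}; since the cubes of $G^k(P^c)$ cover essentially all of $|Q\setminus P|$, straddling copies cause no loss. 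Either repair works, but as written your second-class estimate does not go through.
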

\begin{proof}
Let $0 < \varepsilon \leq 1$ and $P \in \Lambda_D$.

By Lemma~\ref{lem:cube approx} there exists an integer $K_1 = K_1(D, \varepsilon)$ such that 
\[
\label{eq:v1}
\rho(|G_k(P^c)|) \geq 1 - \rho(|P|) - \varepsilon/2 \quad \text{for all} \quad k \geq K_1.
\]
By the definition of density there exists a scalar $R_1 = R_1(k, \varepsilon) > 0$ such that for each body $B$ with $\mathrm{diam}(B) \leq R_1$ we can pack each cube in $G_k(P^c)$ with congruent copies of $B$ with density at least $\Delta_{\{B\}} - \varepsilon/2$. By taking the union of the packings of the cubes together with $\{P\}$ we obtain a packing from $\Delta_{\{B\}}(P)$ which has density at least 
\begin{align*}
\rho(|P|) +  \rho(|G_k(P^c)|) (\Delta_{\{B\}} - \varepsilon/2) &\geq \rho(|P|) + (1 - \rho(|P|) - \varepsilon/2)(\Delta_{\{B\}} - \varepsilon/2)\\
&\geq \rho(|P|) + (1-\rho(|P|)) \Delta_{\{B\}} - \varepsilon.
\end{align*}
This implies 
\[
\Delta_{\{B\}}(P) \geq \rho(|P|) + (1-\rho(|P|)) \Delta_{\{B\}} - \varepsilon 
\]
for all $P \in \Lambda_D$ and all bodies $B$ with $\mathrm{diam}(B) \leq R_1$.

By Lemma~\ref{lem:cube approx} there exists an integer $K_2 = K_2(D, \varepsilon)$ such that
\[
\rho(|G^k(P^c)|) \leq 1 - \rho(|P|) + \varepsilon/3 \quad \text{for all} \quad k \geq K_2.
\]
Again, by the definition of density there exists a scalar $R_2 = R_2(k, \varepsilon)$ such that for each body $B$ with $\mathrm{diam}(B) \leq r$ and each $Q \in \Lambda_{\{B\}}(P)$, the intersection of $|Q \setminus P|$ with a cube from $G^k(P^c)$ has density at most $\Delta_{\{B\}} + \varepsilon/3$ in that cube. So, 
\begin{align*}
\rho(|Q|) & = \rho(|P|) + \rho(|Q \setminus P|) \leq \rho(|P|) + \rho(|G^k(P^c)|)(\Delta_{\{B\}} + \varepsilon/3)\\
&\leq \rho(|P|) + (1 - \rho(|P|) + \varepsilon/3)(\Delta_{\{B\}} + \varepsilon/3)\\
&\leq \rho(|P|) + (1 - \rho(|P|))\Delta_{\{B\}} + \varepsilon,
\end{align*}
hence
\[
\Delta_{\{rB\}}(P) \leq \rho(|P|) + (1-\rho(|P|)) \Delta_{\{B\}} + \varepsilon
\]
for all $P \in \Lambda_D$ and all bodies $B$ with $\mathrm{diam}(B) \leq R_2$.

The proof is then complete by setting $R = \min \{R_1, R_2\}$.
\end{proof}

Using the above result the following lemma is immediate.

\begin{lemma}
\label{lem:addballs}
Suppose $B$ is a body and $D$ is a finite set of bodies. Then,
\[
\lim_{r \downarrow 0} \Delta_{\{B_k\}}(P) = \rho(|P|) + (1 - \rho(|P|)) \Delta_{\{B\}} \quad \text{for} \quad P \in \Lambda_D \quad \text{uniformly}.
\]
\end{lemma}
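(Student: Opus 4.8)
The plan is to obtain this as an immediate consequence of Lemma~\ref{lem:smalldiameter} via a rescaling argument. Fix the body $B$; since a body has nonempty interior, $d := \mathrm{diam}(B)$ is a finite positive number. First I would record the (essentially trivial) fact that the optimal density $\Delta_{\{B\}}$ is invariant under dilation: for any $r>0$ the map $x \mapsto rx$ sends a congruent copy $RB + t$ to the congruent copy $R(rB) + rt$ of $rB$, hence it sends $\Sigma_{\{B\}}$ bijectively onto $\Sigma_{\{rB\}}$, and it preserves upper density because $\lambda(rS \cap sC) = r^n\lambda\big(S \cap (s/r)C\big)$ gives $\overline\rho(rS) = \overline\rho(S)$. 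Taking suprema over $\Sigma$ (which, by the discussion in Section~2, computes $\Delta$) yields $\Delta_{\{rB\}} = \Delta_{\{B\}}$ for all $r > 0$.

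Next, given $\varepsilon > 0$, I would take $R = R(D,\varepsilon) > 0$ from Lemma~\ref{lem:smalldiameter}. Then for every $r$ with $0 < r \le R/d$ the body $rB$ has $\mathrm{diam}(rB) = rd \le R$, so Lemma~\ref{lem:smalldiameter} applied to the body $rB$, together with $\Delta_{\{rB\}} = \Delta_{\{B\}}$, gives
\[
\big|\, \Delta_{\{rB\}}(P) - \rho(|P|) - (1-\rho(|P|))\Delta_{\{B\}} \,\big| \le \varepsilon
\]
for every $P \in \Lambda_D$ simultaneously. Since the threshold $R/d$ depends only on $D$, on $\varepsilon$, and on the fixed body $B$ (through $d$), and not on $P$, this is precisely the assertion that $\Delta_{\{rB\}}(P) \to \rho(|P|) + (1-\rho(|P|))\Delta_{\{B\}}$ as $r \downarrow 0$, uniformly in $P \in \Lambda_D$.

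There is no real obstacle here: all the work has already been done in Lemma~\ref{lem:smalldiameter}. The only points to be careful about are (i) the scale-invariance $\Delta_{\{rB\}} = \Delta_{\{B\}}$, which is what lets me replace the $r$-dependent quantity $\Delta_{\{rB\}}$ appearing on the right-hand side of Lemma~\ref{lem:smalldiameter} by the fixed constant $\Delta_{\{B\}}$; and (ii) tracking that the admissible range of $r$ depends only on $D$, $\varepsilon$, and $B$ — which is exactly what upgrades the pointwise estimate to a uniform one.
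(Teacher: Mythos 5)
Your proposal is correct and matches the paper's intent exactly: the paper gives no separate argument, stating only that the lemma is immediate from Lemma~\ref{lem:smalldiameter}, and your write-up supplies precisely the details implicit in that claim (the dilation invariance $\Delta_{\{rB\}}=\Delta_{\{B\}}$ and the fact that the threshold $R/\mathrm{diam}(B)$ is independent of $P$, which yields uniformity).
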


We prove the main result by using the uniform convergence in the above lemma.

\begin{theorem}
\label{thm:polydisperse:main theorem}
Suppose $B$ is a body and $D$ is a finite set of bodies. Then,
\[
\lim_{r \downarrow 0} \Delta_{\{rB\} \cup D}
= \Delta_D + (1 - \Delta_D) \Delta_{\{B\}}.
\]
\end{theorem}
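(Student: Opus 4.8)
The plan is to deduce Theorem~\ref{thm:polydisperse:main theorem} from Lemma~\ref{lem:addballs} by taking a supremum over the "base" packing $P \in \Lambda_D$. The key observation is that $\Lambda_{\{rB\} \cup D} = \bigcup_{P \in \Lambda_D} \Lambda_{\{rB\}}(P)$ up to the usual density-approximation caveats: any packing $Q \in \Lambda_{\{rB\} \cup D}$ splits into the subpacking $P$ consisting of the congruent copies of bodies from $D$ and the remainder $R$ consisting of copies of $rB$, and although $P$ need not itself have rational box periodicity, one can pass to a large periodic chunk of $Q$ at the cost of an arbitrarily small loss in density (this is exactly the argument already invoked for the identity $\Delta_D = \sup_{P \in \Lambda_D} \rho(|P|)$). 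Hence
\[
\Delta_{\{rB\} \cup D} = \sup_{P \in \Lambda_D} \Delta_{\{rB\}}(P).
\]

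Next I would fix $\varepsilon > 0$. By the uniform convergence in Lemma~\ref{lem:addballs}, there is an $r_0 > 0$ such that for all $0 < r < r_0$ and all $P \in \Lambda_D$,
\[
\bigl| \Delta_{\{rB\}}(P) - \bigl(\rho(|P|) + (1 - \rho(|P|))\Delta_{\{B\}}\bigr) \bigr| \le \varepsilon.
\]
Taking the supremum over $P \in \Lambda_D$ on both sides, and using that $x \mapsto x + (1-x)\Delta_{\{B\}} = \Delta_{\{B\}} + (1 - \Delta_{\{B\}}) x$ is a nondecreasing affine function of $x$ (so the supremum of the right-hand expression over $P$ is attained in the limit $\rho(|P|) \to \Delta_D$ and equals $\Delta_D + (1-\Delta_D)\Delta_{\{B\}}$), one obtains
\[
\bigl| \Delta_{\{rB\} \cup D} - \bigl(\Delta_D + (1 - \Delta_D)\Delta_{\{B\}}\bigr) \bigr| \le \varepsilon
\]
for all $0 < r < r_0$. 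Since $\varepsilon$ was arbitrary, this is the claimed limit.

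The step I expect to need the most care is the first one — justifying the identity $\Delta_{\{rB\} \cup D} = \sup_{P \in \Lambda_D} \Delta_{\{rB\}}(P)$, in particular the "$\le$" direction, because an arbitrary periodic packing $Q$ using both scales decomposes into a $D$-part $P$ and an $rB$-part $R$, but this $P$ is merely in $\Sigma_D$, not necessarily in $\Lambda_D$, so one must first replace $Q$ by a periodic packing built from a large cube-sized piece of it before $P$ becomes periodic; one then checks that the density loss in this truncation step is controlled uniformly, which is where the finiteness of $D$ and the boundedness of its elements (as in Lemma~\ref{lem:cube approx}) are used. The reverse inequality "$\ge$" is immediate since every $Q \in \Lambda_{\{rB\}}(P)$ already lies in $\Lambda_{\{rB\} \cup D}$. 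Everything else is a routine interchange of suprema with the uniform estimate from Lemma~\ref{lem:addballs} and the monotonicity of the affine map $x \mapsto \Delta_{\{B\}} + (1 - \Delta_{\{B\}})x$.
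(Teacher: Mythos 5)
Your proposal is correct and follows essentially the same route as the paper's proof: it establishes $\Delta_{\{rB\}\cup D} = \sup_{P\in\Lambda_D}\Delta_{\{rB\}}(P)$, invokes the uniform convergence of Lemma~\ref{lem:addballs} to interchange the limit with the supremum (your explicit $\varepsilon$-argument is exactly that interchange), and evaluates the supremum via the nondecreasing affine map $x\mapsto \Delta_{\{B\}}+(1-\Delta_{\{B\}})x$ at $\sup_{P}\rho(|P|)=\Delta_D$. If anything, your treatment of the decomposition step is more careful than the paper's, which simply asserts the set identity $\Lambda_{\{rB\}\cup D}=\bigcup_{Q\in\Lambda_D}\Lambda_{\{rB\}}(Q)$, whereas you correctly note that the $D$-part of a periodic mixed packing need not itself be periodic and that one should pass to a large periodic chunk with negligible density loss.
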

\begin{proof}
We have $\Lambda_{\{rB\} \cup D} = \bigcup_{Q \in \Lambda_D} \Lambda_{\{rB\}}(Q)$, so
\[
\Delta_{\{rB\} \cup D} = \sup_{P \in \Lambda_{\{rB\} \cup D}} \rho(|P|) 
= \sup_{Q \in \Lambda_D} \; \sup_{P \in \Lambda_{\{rB\}}(Q)} \rho(|P|)
= \sup_{Q \in \Lambda_D} \Delta_{\{rB\}}(Q),
\]
and
\[
\lim_{r \downarrow 0} \Delta_{\{rB\} \cup D} = \lim_{r \downarrow 0} \sup_{Q \in \Lambda_D} \Delta_{\{rB\}}(Q).
\]
By Lemma~\ref{lem:addballs}, we have 
\[
\lim_{r \downarrow 0} \Delta_{\{rB\}}(Q) = \rho(|Q|) + (1 - \rho(|Q|)) \Delta_{\{B\}},
\]
and since convergence is uniform for $Q \in \Lambda_D$, we can interchange limit and supremum and obtain
\begin{align*}
\lim_{r \downarrow 0} \sup_{Q \in \Lambda_D} \Delta_{\{rB\}}(Q) 
&= \sup_{Q \in \Lambda_D} \lim_{r \downarrow 0} \Delta_{\{rB\}}(Q) 
= \sup_{Q \in \Lambda_D} (\rho(|Q|) + (1 - \rho(|Q|)) \Delta_{\{B\}})\\
&= \Delta_{\{B\}} + (1 - \Delta_{\{B\}}) \sup_{Q \in \Lambda_D} \rho(|Q|)
= \Delta_{\{B\}} + (1 - \Delta_{\{B\}}) \Delta_D,
\end{align*}
and since $\Delta_{\{B\}} + (1 - \Delta_{\{B\}}) \Delta_D = \Delta_D + (1 - \Delta_D) \Delta_{\{B\}}$ this completes the proof.
\end{proof}

As a special case of the above theorem we obtain the result mentioned in the abstract of this note: We have $\lim_{r \downarrow 0} \Delta_{\{B, r B\}} = \Delta + (1 - \Delta) \Delta$, where $B$ is the closed unit ball and where $\Delta = \Delta_{\{B\}}$. By iteratively applying the above theorem and rewriting the resulting expression we see that if $B_1, \ldots, B_k$ are bodies, then
\begin{align*}
\lim_{r_k \downarrow 0} \cdots \lim_{r_1 \downarrow 0} \Delta_{\{r_1B_1, \ldots, r_kB_k\}}
&= \Delta_{\{B_1\}} + (1-\Delta_{\{B_1\}}) \lim_{r_k \downarrow 0} \cdots \lim_{r_2 \downarrow 0} \Delta_{\{r_2B_2, \ldots, r_kB_k\}} \\
&= 1 -(1 - \Delta_{\{B_1\}}) \cdots (1 - \Delta_{\{B_k\}}).
\end{align*}
Moreover, if $B$ is a body and $D = \{ r B: r > 0\}$, then for every $k$ there is an $r \in \R^k$ such that
\[
\Delta_D \geq 1 - (1-\Delta_{\{B\}})^k - \frac{1}{k},
\]
so we get the intuitive result $\Delta_D = 1$. The following proposition gives a strengthening of this result. Note that the condition $\inf_{B \in D} \Delta_{\{B\}} > 0$ here is always satisfied if we restrict to convex bodies. 
\begin{proposition}
Suppose $D$ is a set containing bodies of arbitrarily small diameter such that $s = \inf_{B \in D} \Delta_{\{B\}} > 0$. Then there exist packings consisting of congruent copies of bodies from $D$ whose density is arbitrarily close to $1$; that is, $\Delta_D = 1$.
\end{proposition}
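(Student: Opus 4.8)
The plan is to iterate the interstitial-filling construction underlying Lemma~\ref{lem:smalldiameter}: starting from a single-body packing, at each stage we insert congruent copies of a body from $D$ whose diameter is taken small enough relative to the packing built so far, thereby filling a definite fraction of the remaining interstitial space. This is exactly the argument already indicated in the text for $D = \{rB : r > 0\}$, the only difference being that the bodies used at successive stages need no longer be rescalings of a single body; all we use about them is that $\Delta_{\{B\}} \ge s$ for every $B \in D$, which holds by definition of $s$.

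Precisely, I would establish by induction on $k \ge 1$ the statement: for every $\varepsilon > 0$ there exist $B_1, \dots, B_k \in D$ with
\[
\Delta_{\{B_1, \dots, B_k\}} \ge 1 - (1-s)^k - \varepsilon.
\]
The base case $k = 1$ holds since $\Delta_{\{B_1\}} \ge s \ge s - \varepsilon$ for any $B_1 \in D$. For the inductive step, fix $\varepsilon > 0$ and auxiliary parameters $\varepsilon', \varepsilon'' > 0$ with $(1-s)\varepsilon' + \varepsilon'' \le \varepsilon/2$. By the inductive hypothesis choose $B_1, \dots, B_{k-1} \in D$ with $\Delta_{\{B_1, \dots, B_{k-1}\}} \ge 1 - (1-s)^{k-1} - \varepsilon/2$, and then a packing $P \in \Lambda_{\{B_1, \dots, B_{k-1}\}}$ with $\rho(|P|) \ge \Delta_{\{B_1, \dots, B_{k-1}\}} - \varepsilon'$, so that $1 - \rho(|P|) \le (1-s)^{k-1} + \varepsilon/2 + \varepsilon'$. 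The finite set $\{B_1, \dots, B_{k-1}\}$ is of the form required by Lemma~\ref{lem:smalldiameter}, which supplies a threshold $R = R(\{B_1, \dots, B_{k-1}\}, \varepsilon'') > 0$; since $D$ contains bodies of arbitrarily small diameter we may pick $B_k \in D$ with $\mathrm{diam}(B_k) \le R$. As $\Lambda_{\{B_k\}}(P) \subseteq \Lambda_{\{B_1, \dots, B_k\}}$, Lemma~\ref{lem:smalldiameter} gives
\[
\Delta_{\{B_1, \dots, B_k\}} \ge \Delta_{\{B_k\}}(P) \ge \rho(|P|) + (1 - \rho(|P|))\Delta_{\{B_k\}} - \varepsilon'',
\]
and since $\Delta_{\{B_k\}} \ge s$ this is at least $1 - (1 - \rho(|P|))(1-s) - \varepsilon''$. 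Substituting the bound on $1 - \rho(|P|)$ and using $1 - s \le 1$ yields $\Delta_{\{B_1, \dots, B_k\}} \ge 1 - (1-s)^k - \varepsilon/2 - (1-s)\varepsilon' - \varepsilon'' \ge 1 - (1-s)^k - \varepsilon$, completing the induction.

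Granting this, the proposition follows at once: given $\varepsilon > 0$ pick $k$ with $(1-s)^k \le \varepsilon/2$ and apply the statement with parameter $\varepsilon/2$ to obtain $B_1, \dots, B_k \in D$ with $\Delta_{\{B_1, \dots, B_k\}} \ge 1 - \varepsilon$; since $\{B_1, \dots, B_k\} \subseteq D$ we have $\Lambda_{\{B_1,\dots,B_k\}} \subseteq \Lambda_D$ and hence $\Delta_D \ge \Delta_{\{B_1, \dots, B_k\}} \ge 1 - \varepsilon$. Letting $\varepsilon \downarrow 0$ gives $\Delta_D = 1$.

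I do not expect a real obstacle: the mathematical content is entirely contained in Lemma~\ref{lem:smalldiameter}, and what remains is bookkeeping of the error terms, kept under control by the geometric factor $(1-s)^k$. The one point to keep in mind is that the diameter threshold $R$ produced at stage $k$ depends on the growing finite set $\{B_1, \dots, B_{k-1}\}$; this causes no trouble precisely because $D$ is assumed to contain bodies of arbitrarily small diameter, so an admissible $B_k$ is always available. (The parenthetical remark that $s > 0$ holds automatically when $D$ consists of convex bodies can be justified by invoking a dimension-dependent lower bound on the packing density of an arbitrary convex body.)
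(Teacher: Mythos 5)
Your proposal is correct and follows essentially the same route as the paper: iterate Lemma~\ref{lem:smalldiameter}, at each stage choosing a body from $D$ of diameter below the threshold supplied by the lemma for the finite set used so far, so that the uncovered fraction decays like $(1-s)^k$ up to controllable errors. The only difference is presentational (a formal induction on the density bound $1-(1-s)^k-\varepsilon$ with explicit error splitting, versus the paper's direct construction of a nested sequence of packings achieving $1-(1-(s-\varepsilon))^k$), and your bookkeeping checks out.
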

\begin{proof}
Let $0 < \varepsilon < s$. Select an arbitrary $B_1 \in D$ and choose $P_1 \in \Lambda_{\{B_1\}}$ such that $\rho(|P_1|) \geq \Delta_{\{B_1\}} - \varepsilon$. By Lemma~\ref{lem:smalldiameter} there exists a body $B_2 \in D$ and a packing $P_2 \in \Lambda_{\{B_2\}}(P_1)$ such that
\begin{align*}
\rho(|P_2|) 
&\geq \rho(|P_1|) + (1-\rho(|P_1|)) \Delta_{\{B_2\}} - \varepsilon\\
&= \Delta_{\{B_2\}} + (1 - \Delta_{\{B_2\}}) \rho(|P_1|) -\varepsilon\\
&\geq \Delta_{\{B_1\}} + \Delta_{\{B_2\}} - \Delta_{\{B_1\}} \Delta_{\{B_2\}} - 2\varepsilon\\
&\geq 1 - (1 - \Delta_{\{B_1\}} + \varepsilon)(1 - \Delta_{\{B_2\}} + \varepsilon) \geq 1 - (1 - (s - \varepsilon))^2.
\end{align*}
By continuing like this we see that for each $k$ there exists a packing $P_k \in \Lambda_D$ such that 
\[
\rho(|P_k|) \geq 1 - (1 - (s - \varepsilon))^k,
\]
which completes the proof.
\end{proof}

\begin{bibdiv}
\begin{biblist}

\bib{Bogachev2007}{book}{
   author={Bogachev, V. I.},
   title={Measure theory. Vol. I, II},
   publisher={Springer-Verlag, Berlin},
   date={2007},
   pages={Vol. I: xviii+500 pp., Vol. II: xiv+575},
   isbn={978-3-540-34513-8},
   isbn={3-540-34513-2},
   doi={10.1007/978-3-540-34514-5},
}

\bib{Groemer1963}{article}{
author={Groemer, H.},
title={Existenzs\"atze f\"ur Lagerungen im Euklidischen Raum},
journal={Math. Zeitschr.},
volume={81},
year={1963}, 
pages={260--278}
}

\bib{Hales2005a}{article}{
   author={Hales, Thomas C.},
   title={A proof of the Kepler conjecture},
   journal={Ann. of Math. (2)},
   volume={162},
   date={2005},
   number={3},
   pages={1065--1185},
   issn={0003-486X},
   doi={10.4007/annals.2005.162.1065},
}

\bib{HopkinsStillingerTorquato2012}{article}{
   author={Hopkins, A.B.},
   author={Stillinger, F.H.},
   author={Torquato, S.},
   title={Densest binary sphere packings},
   journal={Phys.~Rev.~E},
   volume={85},
   year={2012},
   url={http://arxiv.org/abs/1111.4917}
}

\bib{LaatOliveiraVallentin2012}{article}{
   author={de Laat, David},
   author={de Oliveira Filho, Fernando M{\'a}rio},
   author={Vallentin, Frank},
   title={Upper bounds for packings of spheres of several radii},
   journal={Forum Math. Sigma},
   volume={2},
   date={2014},
   pages={e23, 42},
   issn={2050-5094},
   doi={10.1017/fms.2014.24},
   url={\url{http://arxiv.org/abs/1206.2608}}
}

\bib{Torquato2001}{book}{
author={Torquato, S.},
title={Random heterogeneous materials, microstructure and macroscopic properties},
publisher={Springer-Verlag}, 
address={New York}, 
year={2002}
}

\bib{UcheStillingerTorquato2004}{article}{
author = {Uche, O.U.},
author = {Stillinger, F.H.},
author = {Torquato, S.},
title  = {Concerning maximal packing arrangements of binary disk mixtures},
journal = {Physica A},
volume = {342},
year = {2004},
pages = {428--446}
}

\end{biblist}
\end{bibdiv}  

\end{document}